\newtheorem{theorem}{Theorem}
\newtheorem{corollary}{Corollary}
\newtheorem{lemma}{Lemma}
\newtheorem{remark}{Remark}
\numberwithin{equation}{section}
\newtheorem{definitionalph}{Definition}
\begin{document}
\title[Ostrowski type fractional integral inequalities for $h$-convex functions]{Some Ostrowski type inequalities via Riemann-Liouville fractional integrals for $h-$convex functions}
\author{Wenjun Liu}
\address{College of Mathematics and Statistics, Nanjing University of
Information Science and Technology, Nanjing 210044, China}
\email{wjliu.cn@gmail.com}
\subjclass[2000]{ 26A33, 26A51, 26D07, 26D10, 26D15.}
\keywords{Ostrowski type inequality, $h-$convex function,
Riemann-Liouville fractional integral.}

\begin{abstract}
In this paper, some Ostrowski type  inequalities via Riemann-Liouville fractional integrals for $h-$convex functions, which are super-multiplicative or
super-additive,  are given. These results not only generalize those of \cite{s2012,t2012}, but also provide new estimates on these types of Ostrowski
inequalities for fractional integrals.
\end{abstract}

\maketitle

\section{Introduction}

Let $f:I\rightarrow \mathbb{R},$ where $I\subseteq\mathbb{R}$ is an interval, be a mapping differentiable in the interior $I^{\circ }$
of $I$, and let $a,b\in I^{\circ }$ with $a<b$. If $\left| f^{\prime
}\left( x\right) \right| \leq M$ for all $x\in \left[ a,b\right] $, then
\begin{equation}
\left| f(x)-\frac{1}{b-a}\int_{a}^{b}f(t)dt\right| \leq M\left(
b-a\right) \left[ \frac{1}{4}+\frac{\left( x-\frac{a+b}{2}\right) ^{2}}{%
\left( b-a\right) ^{2}}\right], \quad \forall\ x\in \left[ a,b\right].   \label{e1}
\end{equation}%
  This  is the well-known  Ostrowski inequality (see \cite{OST} or \cite[page 468]{Mitrinovic1}), which gives an upper bound for the approximation of the integral
average $\frac{1}{(b-a)}\int_{a}^{b}f(t)dt$ by the value $f(x)$ at point $%
x\in \left[ a,b\right].$  In recent years, a number of
authors have written about generalizations, extensions and variants of  such inequalities  (see \cite{ADDC,D1,D3,ln,Liu,Zhongxue,sarikaya1}).  

Let us recall definitions of some  kinds of convexity as follows.

\begin{definitionalph}
  \cite{god} We say that $f:I\rightarrow\mathbb{R}$  is a Godunova-Levin function or that  $f$ belongs to the
class  $Q(I) $  if  $f$ is non-negative and for
all  $x,y\in I$  and  $t\in (0,1)$, one has
\begin{equation*}
f(tx+(1-t) y) \leq \frac{f(x) }{t}+\frac{%
f(y) }{1-t}.
\end{equation*}
\end{definitionalph}

\begin{definitionalph}
\cite{dr1} We say that  $f:I\subseteq\mathbb{R}\rightarrow\mathbb{R}$ is a  $P-$function or that $f$ belongs to the
class $P(I) $ if  $f$ is non-negative and for
all $x,y\in I$ and  $t\in [0,1],$ one has
\begin{equation*}
f(tx+(1-t) y) \leq f(x) +f(
y).
\end{equation*}
\end{definitionalph}

\begin{definitionalph}
\cite{hud}  We say that $f:(0,\infty ] \rightarrow [0,\infty ] $  is
 $s-$convex in the second sense, or that $f$ belongs to
the class  $K_{s}^{2}$, if  for all $x,y\in (0,b] $, $t\in [0,1%
] $ and for some fixed $s\in (0,1]$, one has
\begin{equation*}
f(tx+(1-t) y) \leq t^{s}f(x) +(
1-t) ^{s}f(y).
\end{equation*}%
\end{definitionalph}

\begin{definitionalph}
\cite{var}  Let  $h:J\subseteq\mathbb{R}\rightarrow\mathbb{R}$  be a positive function. We say that $f:I\subseteq\mathbb{R}\rightarrow\mathbb{R}$  is $h-$convex, or that $f$ belongs to
the class $SX(h,I) $,  if  $f$ is non-negative
and for all $x,y\in I$  and  $t\in [0,1]$, one has
\begin{equation}
f(tx+(1-t) y) \leq h(t) f(x)
+h(1-t) f(y).  \label{106}
\end{equation}
If inequality (\ref{106}) is reversed, then $f$ is said to be $h-$%
concave, i.e. $f\in SV(h,I) $.
\end{definitionalph}

 If $h(t)=t $, then all non-negative convex functions belong to $SX(h,I) $ and all non-negative concave functions belong to $SV(h,I) $; if
$h(t) =\frac{1}{t}$, then $SX(h,I) =Q(I)$; if $h(t) =1$, then $SX(h,I) \supseteq P(I) $; and if $h(t) =t^{s}$ for $s\in (0,1] $, then $SX(h,I) \supseteq K_{s}^{2}$.

\begin{remark}
   \cite{var} Let $h$ be a non-negative function such that $h(t ) \geq t $
for all $t \in (0,1)$.   If $f$ is a non-negative convex
function on $I$, then for $x,y\in I$, $t \in (0,1)$, one has
\begin{equation}
f(t x+(1-t )y) \leq t f(x)+(1-t )f(y)\leq
h(t )f(x)+h(1-t )f(y).  \label{108}
\end{equation}%
So, $f\in SX(h,I)$. Similarly, if the function $h$ has the property: $%
h(t )\leq t $ for all $t \in (0,1)$, then any non-negative
concave function $f$ belongs to the class $SV(h,I)$.
\end{remark}

\begin{definitionalph}
\cite{var} We say that $h:J\rightarrow\mathbb{R}$ is a super-multiplicative function, if for all $x,y\in J,$ one has
\begin{equation*}
h(xy) \geq h(x) h(y).
\end{equation*}
\end{definitionalph}

\begin{definitionalph}
\cite{alzer} We say that $h:J\rightarrow\mathbb{R}$ is   a super-additive function, if for all $x,y\in J,$ one has
\begin{equation*}
h(x+y) \geq h(x) +h(y).
\end{equation*}%
\end{definitionalph}

For recent results concerning $h-$convex functions see \cite{bom,zeki,t2012,var} and references therein. More recently, Tunc \cite{t2012} established some new Ostrowski type inequalities for the class of $h-$convex functions which are super-multiplicative or super-additive.

We then recall some definitions and mathematical preliminaries of
fractional calculus theory which will be used throughout this paper.

\begin{definitionalph}
Let $f\in L_{1}[a,b].$ The Riemann-Liouville integrals $J_{a+}^{\alpha }f$
and $J_{b-}^{\alpha }f$ of order $\alpha >0$ with $a\geq 0$ are defined by
\begin{equation*}
J_{a+}^{\alpha }f(x)=\frac{1}{\Gamma (\alpha )}\int_{a}^{x}\left( x-t\right)
^{\alpha -1}f(t)dt,\ \ x>a
\end{equation*}%
and%
\begin{equation*}
J_{b-}^{\alpha }f(x)=\frac{1}{\Gamma (\alpha )}\int_{x}^{b}\left( t-x\right)
^{\alpha -1}f(t)dt,\ \ x<b,
\end{equation*}
respectively, where $\Gamma (\alpha )=\int_{0}^{\infty }e^{-u}u^{\alpha -1}du$. Here, $%
J_{a+}^{0}f(x)=J_{b-}^{0}f(x)=f(x).$
\end{definitionalph}

In the case of $\alpha =1,$ the fractional integral reduces to the classical
integral. For some recent results connected with fractional integral inequalities we refer the reader to the papers \cite{Anastassiou,Belarbi,Dahmani1,d2011,Gorenflo,Miller,Podlubny,sarikaya2}  and the reference cited therein. In \cite{s2012}, Set established some new
Ostrowski type inequalities for $s-$convex functions in the second sense via Riemann-Liouville fractional integral.

Motivated by these results, in the present paper, we
establish some Ostrowski type inequalities via Riemann-Liouville fractional integrals for $h-$convex functions, which are super-multiplicative or
super-additive. So, new estimates on these types of Ostrowski inequalities via fractional integrals are provided and the results of \cite{s2012,t2012} are generalized.

\section{Ostrowski type fractional integral inequalities for $h$-convex functions}

To prove our main theorems, we need the following identity  established  by Set in  \cite{s2012}:

\begin{lemma}
\label{L1}  Let $f:\left[ a,b\right] \rightarrow \mathbb{R}$ be a
differentiable mapping on $(a,b)$ with $a<b.$ If $f'\in L_1\left[ a,b%
\right],$ then for all $x\in \left[ a,b\right] $ and $\alpha >0$, one has
\begin{align}  \label{E1}
&\left( \frac{\left( x-a\right) ^{\alpha }+\left( b-x\right) ^{\alpha }}{b-a%
}\right) f(x)-\frac{\Gamma (\alpha +1)}{\left( b-a\right) }\left[
J_{x-}^{\alpha }f(a)+J_{x+}^{\alpha }f(b)\right] \notag \\
=&\frac{\left( x-a\right) ^{\alpha +1}}{b-a}\int_{0}^{1}t^{\alpha
}f'\left( tx+(1-t)a\right) dt-\frac{\left( b-x\right) ^{\alpha +1}}{%
b-a}\int_{0}^{1}t^{\alpha }f'\left( tx+(1-t)b\right) dt.
\end{align}%
\end{lemma}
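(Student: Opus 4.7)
The plan is to verify identity (\ref{E1}) by evaluating the two integrals on the right-hand side separately, each via a change of variables followed by one integration by parts, and then observing that the resulting expressions assemble exactly into the left-hand side.

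For the first integral, I would make the substitution $u=tx+(1-t)a$, so that $du=(x-a)\,dt$ and $t=(u-a)/(x-a)$, sending $[0,1]$ to $[a,x]$. This converts $\int_0^1 t^{\alpha}f'(tx+(1-t)a)\,dt$ into $(x-a)^{-(\alpha+1)}\int_a^x(u-a)^{\alpha}f'(u)\,du$. Integrating by parts with $U=(u-a)^{\alpha}$ and $dV=f'(u)\,du$, the boundary term at $u=a$ vanishes and the boundary term at $u=x$ gives $(x-a)^{\alpha}f(x)$, while the remaining integral $\alpha\int_a^x(u-a)^{\alpha-1}f(u)\,du$ is precisely $\alpha\,\Gamma(\alpha)J_{x-}^{\alpha}f(a)=\Gamma(\alpha+1)J_{x-}^{\alpha}f(a)$ by the definition of the right-sided Riemann--Liouville integral. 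Multiplying through by $(x-a)^{\alpha+1}/(b-a)$ then yields
\[
\frac{(x-a)^{\alpha+1}}{b-a}\int_0^1 t^{\alpha}f'(tx+(1-t)a)\,dt
=\frac{(x-a)^{\alpha}f(x)-\Gamma(\alpha+1)J_{x-}^{\alpha}f(a)}{b-a}.
\]

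The second integral I would treat analogously with the substitution $u=tx+(1-t)b$, for which $du=-(b-x)\,dt$, so the orientation reverses and the resulting integral runs over $[x,b]$ with integrand $(b-u)^{\alpha}f'(u)$. Integration by parts ($U=(b-u)^{\alpha}$, $dV=f'(u)\,du$) now makes the boundary term at $u=b$ vanish while the boundary term at $u=x$ contributes $-(b-x)^{\alpha}f(x)$; the remaining $\alpha\int_x^b(b-u)^{\alpha-1}f(u)\,du$ matches $\Gamma(\alpha+1)J_{x+}^{\alpha}f(b)$. After multiplying by the prefactor $-(b-x)^{\alpha+1}/(b-a)$ in front of this term, one obtains
\[
-\frac{(b-x)^{\alpha+1}}{b-a}\int_0^1 t^{\alpha}f'(tx+(1-t)b)\,dt
=\frac{(b-x)^{\alpha}f(x)-\Gamma(\alpha+1)J_{x+}^{\alpha}f(b)}{b-a}.
\]

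Adding these two computations and grouping the $f(x)$-terms reproduces exactly the left-hand side of (\ref{E1}). The proof is a routine calculation; there is no real obstacle beyond bookkeeping, and the only step requiring care is tracking the sign flip in the second substitution so that the minus sign in front of $(b-x)^{\alpha+1}$ is absorbed correctly into $+(b-x)^{\alpha}f(x)$ and $-\Gamma(\alpha+1)J_{x+}^{\alpha}f(b)$ on the assembled side.
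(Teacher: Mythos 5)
Your proof is correct and is essentially the standard argument: the paper itself does not prove Lemma \ref{L1} but quotes it from \cite{s2012}, where it is established by exactly this route --- the affine substitutions $u=tx+(1-t)a$ and $u=tx+(1-t)b$ followed by one integration by parts each, with the boundary terms producing $\left(x-a\right)^{\alpha}f(x)$ and $\left(b-x\right)^{\alpha}f(x)$ and the remaining integrals matching $\Gamma(\alpha+1)J_{x-}^{\alpha}f(a)$ and $\Gamma(\alpha+1)J_{x+}^{\alpha}f(b)$. Your sign bookkeeping in the second substitution and the identification with the definitions of $J_{x-}^{\alpha}f(a)$ and $J_{x+}^{\alpha}f(b)$ are both accurate, so nothing is missing.
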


Using this lemma, we can obtain the following fractional integral inequalities for $h-$convex functions.

\begin{theorem}
\label{T1} Let $h:J\subseteq\mathbb{R}\rightarrow\mathbb{R}$  $([0,1]\subseteq J)$ be a non-negative and super-multiplicative function, $h\left( t \right) \geq t$ for $0\le t\le 1$, $f:\left[ a,b\right] \subset \lbrack 0,\infty )\rightarrow
\mathbb{R}$  be a differentiable mapping on $(a,b)$ with $a<b$ such that $%
f'\in L_1\left[ a,b\right].$ If $\left| f'\right| $
is $h-$convex  on $[a,b]$  and
$\left| f'(x)\right| \leq M,$ $x\in \left[ a,b\right],$
then the following inequalities for fractional integrals with $\alpha >0$
hold:%
\begin{align}
&\left| \left( \frac{\left( x-a\right) ^{\alpha }+\left( b-x\right)
^{\alpha }}{b-a}\right) f(x)-\frac{\Gamma (\alpha +1)}{\left( b-a\right) }%
\left[ J_{x-}^{\alpha }f(a)+J_{x+}^{\alpha }f(b)\right] \right|   \notag \\
\leq &\frac{M\left[ \left( x-a\right)
^{\alpha +1}+\left( b-x\right) ^{\alpha +1}\right]}{b-a}\int_{0}^{1}\left[t^{\alpha}h\left(t\right)   +t^{\alpha}h\left(1-t\right) \right] dt \label{2.2} \\
\leq &\frac{M\left[ \left( x-a\right)
^{\alpha +1}+\left( b-x\right) ^{\alpha +1}\right]}{b-a}\int_{0}^{1}\left[h\left(t^{\alpha+1}\right)   +h\left(t^{\alpha}(1-t)\right) \right] dt.  \label{E6}
\end{align}%
\end{theorem}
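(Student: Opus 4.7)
The plan is to start from the identity (\ref{E1}) of Lemma \ref{L1}, pass to absolute values via the triangle inequality, and then bound the two resulting $L^1$-type integrals using the $h$-convexity of $|f'|$ together with the uniform bound $|f'|\le M$. The factor $t^\alpha$ in each integrand will be absorbed into the function $h$ in the second step by invoking super-multiplicativity.

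\medskip

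\textbf{Step 1 (Triangle inequality).} Applying $|\cdot|$ to both sides of (\ref{E1}) and using $|\int\cdot|\le\int|\cdot|$ yields
\begin{align*}
&\left|\left(\frac{(x-a)^\alpha+(b-x)^\alpha}{b-a}\right)f(x)-\frac{\Gamma(\alpha+1)}{b-a}\bigl[J_{x-}^\alpha f(a)+J_{x+}^\alpha f(b)\bigr]\right|\\
&\qquad\le\frac{(x-a)^{\alpha+1}}{b-a}\int_0^1 t^\alpha\,|f'(tx+(1-t)a)|\,dt+\frac{(b-x)^{\alpha+1}}{b-a}\int_0^1 t^\alpha\,|f'(tx+(1-t)b)|\,dt.
\end{align*}

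\medskip

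\textbf{Step 2 (Use $h$-convexity and the $M$-bound to prove (\ref{2.2})).} Since $|f'|\in SX(h,[a,b])$, I have $|f'(tx+(1-t)a)|\le h(t)|f'(x)|+h(1-t)|f'(a)|\le M[h(t)+h(1-t)]$, and analogously for the endpoint $b$. Substituting these into both integrals and combining the two prefactors gives inequality (\ref{2.2}).

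\medskip

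\textbf{Step 3 (Absorb $t^\alpha$ into $h$ via super-multiplicativity to prove (\ref{E6})).} Here I exploit the two hypotheses on $h$ in tandem. Since $h(s)\ge s$ for $s\in[0,1]$ and $t\in[0,1]$ gives $t^\alpha\in[0,1]$ for $\alpha>0$, it follows that $h(t^\alpha)\ge t^\alpha$. Combining this with super-multiplicativity, $h(uv)\ge h(u)h(v)$, yields the two pointwise estimates
\[
t^\alpha h(t)\le h(t^\alpha)\,h(t)\le h\bigl(t^{\alpha+1}\bigr),\qquad t^\alpha h(1-t)\le h(t^\alpha)\,h(1-t)\le h\bigl(t^\alpha(1-t)\bigr),
\]
valid for all $t\in[0,1]$. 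Integrating these over $[0,1]$ and inserting into the right-hand side of (\ref{2.2}) produces (\ref{E6}).

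\medskip

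The only place where something subtle happens is Step 3: one must notice that the combination ``$h(t)\ge t$'' plus ``super-multiplicative'' is precisely what allows the weight $t^\alpha$ to be swallowed inside $h$, producing the rewritten bound $(\ref{E6})$. Everything else is a direct manipulation of Lemma \ref{L1} and the definition of $h$-convexity.
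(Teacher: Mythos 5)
Your proposal is correct and follows essentially the same route as the paper: triangle inequality on Lemma \ref{L1}, then $h$-convexity plus the bound $M$ for \eqref{2.2}, and finally the pointwise estimates $t^{\alpha}h(t)\le h(t^{\alpha})h(t)\le h(t^{\alpha+1})$ and $t^{\alpha}h(1-t)\le h(t^{\alpha})h(1-t)\le h(t^{\alpha}(1-t))$ (using $h(t^{\alpha})\ge t^{\alpha}$, non-negativity, and super-multiplicativity) for \eqref{E6}, exactly as in the paper's display \eqref{2.3}. Nothing further is needed.
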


\begin{proof}
From (\ref{E1}) and since $\left| f'\right| $ is  $h-$%
convex, we have
\begin{align*}
&\left| \left( \frac{\left( x-a\right) ^{\alpha }+\left( b-x\right)
^{\alpha }}{b-a}\right) f(x)-\frac{\Gamma (\alpha +1)}{\left( b-a\right) }%
\left[ J_{x-}^{\alpha }f(a)+J_{x+}^{\alpha }f(b)\right] \right|  \\
\leq &\frac{\left( x-a\right) ^{\alpha +1}}{b-a}\int_{0}^{1}t^{\alpha
}\left| f'\left( tx+(1-t)a\right) \right| dt +\frac{\left( b-x\right) ^{\alpha +1}}{b-a}\int_{0}^{1}t^{\alpha
}\left| f'\left( tx+(1-t)b\right) \right| dt \\
\leq &\frac{\left( x-a\right) ^{\alpha +1}}{b-a}\int_{0}^{1}\left[t^{\alpha}h(t)\left| f'(x)\right| +t^{\alpha }h(1-t)\left|
f'(a)\right|\right] dt \\
&+\frac{\left( b-x\right) ^{\alpha +1}}{b-a}\int_{0}^{1}\left[t^{\alpha}h(t)\left| f'(x)\right| +t^{\alpha }h(1-t)\left|
f'(b)\right|\right] dt \\
\leq &\frac{M\left( x-a\right) ^{\alpha +1}}{b-a}\int_{0}^{1}\left[t^{\alpha}h(t) +t^{\alpha }h(1-t) \right] dt +\frac{M\left( b-x\right) ^{\alpha +1}}{b-a}\int_{0}^{1}\left[t^{\alpha}h(t)  +t^{\alpha }h(1-t) \right] dt,
\end{align*}
which completes the  proof of   \eqref{2.2}.

By using the additional
properties of $h$ in the assumptions, we further have
\begin{align}
 \int_{0}^{1}\left[t^{\alpha}h(t)  +t^{\alpha }h(1-t) \right] dt
\leq &\int_{0}^{1}\left[h\left(t^{\alpha}\right)h(t)  +h\left(t^{\alpha}\right)h(1-t)\right] dt  \notag \\
\leq & \int_{0}^{1}\left[h\left(t^{\alpha+1}\right)  +h\left(t^{\alpha}(1-t)\right)\right] dt. \label{2.3}
\end{align}%
Hence,    the  proof of   \eqref{E6} is complete.
\end{proof}

\begin{remark}
\label{R1}
We note that in the  proof of \eqref{2.2} we does not use the additional super-multiplicative property of $h$ and the condition ``$h\left( t \right) \geq t$ for $0\le t\le 1$".
In Theorem \ref{T1}, if we choose $\alpha =1$, then  \eqref{E6}
reduces the inequality \cite[(2.1)]{t2012}, i.e.,
\begin{align*}
\left\vert f\left( x\right) -\frac{1}{b-a}\int_{a}^{b}f\left( u\right)
du\right\vert \leq \frac{M\left[ \left( x-a\right) ^{2}+\left( b-x\right)
^{2}\right] }{b-a}\int_{0}^{1}\left[ h\left( t^{2}\right) +h\left(
t-t^{2}\right) \right] dt,
\end{align*}
which can be better than the inequality  \eqref{e1} provide that   $h$  is chosen such that
$$\int_{0}^{1}\left[ h\left( t^2\right) +h\left( t-t^2\right) \right]
dt<\frac{1}{2}.$$  In Theorem \ref{T1}, if we choose $h(t)=t$, then  \eqref{2.2} and \eqref{E6}
reduce  the inequality in  \cite[Corollary 1]{s2012}.
\end{remark}

In the next corollary, we will also make use of the Beta function of Euler
type, which is  defined as
\begin{equation*}
\beta \left( x,y\right) =\int_{0}^{1}t^{x-1}\left( 1-t\right) ^{y-1}dt=\frac{%
\Gamma \left( x\right) \Gamma \left( y\right) }{\Gamma \left( x+y\right) }, \quad \forall\ x,y>0.
\end{equation*}

\begin{corollary}
If we choose $h\left( t\right) =t^{s}$,   $s\in \left( 0,1\right] $, in Theorem \ref{T1}, then
we have%
\begin{align*}
&\left|\left( \frac{\left( x-a\right) ^{\alpha }+\left( b-x\right) ^{\alpha }}{b-a%
}\right) f(x)-\frac{\Gamma (\alpha +1)}{\left( b-a\right) }\left[
J_{x-}^{\alpha }f(a)+J_{x+}^{\alpha }f(b)\right] \right|\notag \\
\leq &\frac{M}{b-a}\left[1+\frac{\Gamma \left(\alpha+1\right) \Gamma \left(s+1\right) }{\Gamma \left( \alpha +s+1\right) }\right]\frac{  \left( x-a\right)
^{\alpha +1}+\left( b-x\right) ^{\alpha +1} }{ \alpha +s+1 }\notag \\
\leq &\frac{M}{b-a}\left[1+\frac{\Gamma \left(\alpha s+1\right) \Gamma \left(s+1\right) }{\Gamma \left( \alpha s+s+1\right) }\right]\frac{  \left( x-a\right)
^{\alpha +1}+\left( b-x\right) ^{\alpha +1} }{ \alpha s+s+1 },
\end{align*}
due to the fact that
\begin{align*}
&\int_{0}^{1}\left[h\left(t^{\alpha+1}\right)   +h\left(t^{\alpha}(1-t)\right) \right] dt
=\int_{0}^{1}  t^{s(\alpha+1)} dt+ \int_{0}^{1} t^{\alpha s}(1-t)^s    dt\\
=& \frac{1}{\alpha s+s+1}+\frac{\Gamma \left(\alpha s+1\right) \Gamma \left(s+1\right) }{\Gamma \left( \alpha s+s+2\right) }
= \frac{1}{\alpha s+s+1}\left[1+\frac{\Gamma \left(\alpha s+1\right) \Gamma \left(s+1\right) }{\Gamma \left( \alpha s+s+1\right) }\right].
\end{align*}
The first inequality is the same as the one established in \cite[Theorem 7]{s2012}.
 \end{corollary}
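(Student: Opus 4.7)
The plan is to apply Theorem \ref{T1} with the specific choice $h(t)=t^{s}$, $s\in(0,1]$, and then evaluate the two integrals that appear on the right-hand sides of \eqref{2.2} and \eqref{E6} in closed form using the Beta function identity $\beta(x,y)=\Gamma(x)\Gamma(y)/\Gamma(x+y)$.

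\textbf{Step 1: Verify the hypotheses.} For $h(t)=t^{s}$ with $s\in(0,1]$, the function is non-negative on $J=[0,\infty)$, and the super-multiplicativity hypothesis holds with equality since $h(xy)=(xy)^{s}=x^{s}y^{s}=h(x)h(y)$. Moreover, for $t\in[0,1]$ and $s\in(0,1]$ we have $t^{s}\geq t$, so the condition $h(t)\geq t$ required by Theorem \ref{T1} is satisfied. Hence Theorem \ref{T1} applies and yields the two bounds \eqref{2.2} and \eqref{E6} with this particular $h$.

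\textbf{Step 2: Evaluate the integral in \eqref{2.2}.} Substituting $h(t)=t^{s}$ into $\int_{0}^{1}[t^{\alpha}h(t)+t^{\alpha}h(1-t)]\,dt$ gives $\int_{0}^{1} t^{\alpha+s}\,dt+\int_{0}^{1} t^{\alpha}(1-t)^{s}\,dt$. The first piece is $\tfrac{1}{\alpha+s+1}$ and the second is $\beta(\alpha+1,s+1)=\Gamma(\alpha+1)\Gamma(s+1)/\Gamma(\alpha+s+2)$. Using the functional equation $\Gamma(\alpha+s+2)=(\alpha+s+1)\Gamma(\alpha+s+1)$, I can factor out $1/(\alpha+s+1)$ and produce the bracketed expression $1+\Gamma(\alpha+1)\Gamma(s+1)/\Gamma(\alpha+s+1)$, which, multiplied by $M[(x-a)^{\alpha+1}+(b-x)^{\alpha+1}]/(b-a)$, yields the first stated inequality in the corollary.

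\textbf{Step 3: Evaluate the integral in \eqref{E6}.} Here the computation is already laid out in the statement: substituting $h(t)=t^{s}$ into $\int_{0}^{1}[h(t^{\alpha+1})+h(t^{\alpha}(1-t))]\,dt$ gives $\int_{0}^{1} t^{s(\alpha+1)}\,dt+\int_{0}^{1} t^{\alpha s}(1-t)^{s}\,dt=\tfrac{1}{\alpha s+s+1}+\beta(\alpha s+1,s+1)$, and the same factorization trick (applied to $\Gamma(\alpha s+s+2)=(\alpha s+s+1)\Gamma(\alpha s+s+1)$) produces the bracketed factor in the second stated inequality.

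\textbf{Step 4: Assemble.} Multiplying each of the two evaluated integrals by the common prefactor $M[(x-a)^{\alpha+1}+(b-x)^{\alpha+1}]/(b-a)$ delivers the two displayed upper bounds, and the chain of inequalities of the corollary then follows directly from the chain \eqref{2.2}--\eqref{E6} of Theorem \ref{T1}. There is no real obstacle here; the only point that requires any care is the cancellation of $\alpha+s+1$ (resp.\ $\alpha s+s+1$) between $\Gamma(\alpha+s+2)$ (resp.\ $\Gamma(\alpha s+s+2)$) and the prefactor, which lets one write the bound in the clean bracketed form exhibited in the statement; the identification with \cite[Theorem 7]{s2012} is then a matter of comparing the resulting closed-form expression with the bound established there.
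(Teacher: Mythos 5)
Your proposal is correct and follows essentially the same route as the paper: substitute $h(t)=t^{s}$ into the two integrals of Theorem \ref{T1}, evaluate them via the Beta function, and pull out the factor $\frac{1}{\alpha+s+1}$ (resp. $\frac{1}{\alpha s+s+1}$) using $\Gamma(z+1)=z\Gamma(z)$. You are in fact slightly more complete than the paper, which only displays the evaluation of the integral from \eqref{E6} and leaves the analogous computation for \eqref{2.2} (and the verification that $t^{s}$ is super-multiplicative with $t^{s}\geq t$ on $[0,1]$) implicit.
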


\begin{theorem}
\label{T2} Let $h:J\subseteq\mathbb{R}\rightarrow\mathbb{R}$  $([0,1]\subseteq J)$ be a non-negative and super-additive function, and $f:\left[ a,b\right] \subset \lbrack 0,\infty )\rightarrow
\mathbb{R}$  be a differentiable mapping on $(a,b)$ with $a<b$ such that $%
f'\in L_1\left[ a,b\right].$ If $\left| f'\right|^q $
is $h-$convex  on $[a,b]$, $p,q>1$, $\frac{1}{p}+\frac{1}{q}=1,$  and
$\left| f'(x)\right| \leq M,$ $x\in \left[ a,b\right],$
then the following inequality for fractional integrals with $\alpha >0$
holds:%
\begin{align}
&\left| \left( \frac{\left( x-a\right) ^{\alpha }+\left( b-x\right)
^{\alpha }}{b-a}\right) f(x)-\frac{\Gamma (\alpha +1)}{\left( b-a\right) }%
\left[ J_{x-}^{\alpha }f(a)+J_{x+}^{\alpha }f(b)\right] \right| \notag \\
\leq &\frac{M\left[ \left( x-a\right)
^{\alpha +1}+\left( b-x\right) ^{\alpha +1}\right]}{\left( 1+p\alpha \right) ^{\frac{1}{p}}(b-a)} \left(\int_{0}^{1}\left[ h\left( t\right) +h\left( 1-t\right) \right]
dt\right)^{\frac{1}{q}} \label{E7}\\
\leq &\frac{M\left[ \left( x-a\right)
^{\alpha +1}+\left( b-x\right) ^{\alpha +1}\right]}{\left( 1+p\alpha \right) ^{\frac{1}{p}}(b-a)}  h^{\frac{1}{q}}(1).   \label{2.6}
\end{align}%
\end{theorem}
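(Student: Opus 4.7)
The plan is to start from the identity in Lemma~\ref{L1}, apply the triangle inequality on the right-hand side, and then handle each of the two resulting integrals
$$I_1:=\int_{0}^{1}t^{\alpha}\bigl|f'(tx+(1-t)a)\bigr|\,dt,\qquad I_2:=\int_{0}^{1}t^{\alpha}\bigl|f'(tx+(1-t)b)\bigr|\,dt$$
by the same argument, since the hypotheses on $x$, $a$, $b$ enter symmetrically.

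First I would split each integrand as $t^{\alpha}\cdot|f'(\cdot)|$ and apply H\"{o}lder's inequality with exponents $p$ and $q$:
\begin{equation*}
I_1\le\left(\int_{0}^{1}t^{p\alpha}\,dt\right)^{\!1/p}\!\left(\int_{0}^{1}\bigl|f'(tx+(1-t)a)\bigr|^{q}dt\right)^{\!1/q}
=\frac{1}{(1+p\alpha)^{1/p}}\left(\int_{0}^{1}\bigl|f'(tx+(1-t)a)\bigr|^{q}dt\right)^{\!1/q}.
\end{equation*}
Because $|f'|^{q}$ is $h$-convex on $[a,b]$ and $|f'|\le M$, the $h$-convexity inequality together with the bound $M$ yields
$$\bigl|f'(tx+(1-t)a)\bigr|^{q}\le h(t)|f'(x)|^{q}+h(1-t)|f'(a)|^{q}\le M^{q}\bigl[h(t)+h(1-t)\bigr],$$
and analogously for the integrand in $I_2$. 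Integrating and substituting back gives
\begin{equation*}
I_1,\,I_2\le \frac{M}{(1+p\alpha)^{1/p}}\left(\int_{0}^{1}\bigl[h(t)+h(1-t)\bigr]dt\right)^{\!1/q}.
\end{equation*}
Multiplying by the factors $(x-a)^{\alpha+1}/(b-a)$ and $(b-x)^{\alpha+1}/(b-a)$ respectively and summing produces inequality \eqref{E7}.

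For the final inequality \eqref{2.6} I would invoke the super-additivity of $h$: for any $t\in[0,1]$,
$$h(t)+h(1-t)\le h\bigl(t+(1-t)\bigr)=h(1),$$
so that $\int_{0}^{1}[h(t)+h(1-t)]\,dt\le h(1)$, and raising to the power $1/q$ converts the middle bound into $h^{1/q}(1)$, completing the proof. No step looks delicate; the only place where some care is needed is keeping track of the exponent on $|f'|$, since it is $|f'|^{q}$ (not $|f'|$) that is assumed $h$-convex, which is exactly what is needed to absorb the $1/q$ power coming out of H\"{o}lder.
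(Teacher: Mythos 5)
Your proposal is correct and follows essentially the same route as the paper's proof: Lemma~\ref{L1} plus the triangle inequality, H\"{o}lder with exponents $p,q$ on $t^{\alpha}|f'(\cdot)|$, the $h$-convexity of $|f'|^{q}$ together with the bound $M$ to estimate the $q$-th power integrals, and super-additivity giving $h(t)+h(1-t)\le h(1)$ for \eqref{2.6}. No issues.
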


\begin{proof}
From Lemma \ref{L1} and using the well-known H\"{o}lder's inequality, we have%
\begin{align*}
&\left| \left( \frac{\left( x-a\right) ^{\alpha }+\left( b-x\right)
^{\alpha }}{b-a}\right) f(x)-\frac{\Gamma (\alpha +1)}{\left( b-a\right) }%
\left[ J_{x-}^{\alpha }f(a)+J_{x+}^{\alpha }f(b)\right] \right|  \\
\leq &\frac{\left( x-a\right) ^{\alpha +1}}{b-a}\int_{0}^{1}t^{\alpha
}\left| f'\left( tx+(1-t)a\right) \right| dt  +\frac{\left( b-x\right) ^{\alpha +1}}{b-a}\int_{0}^{1}t^{\alpha
}\left| f'\left( tx+(1-t)b\right) \right| dt \\
\leq &\frac{\left( x-a\right) ^{\alpha +1}}{b-a}\left(
\int_{0}^{1}t^{p\alpha }dt\right) ^{\frac{1}{p}}\left(
\int_{0}^{1}\left| f'\left( tx+(1-t)a\right) \right|
^{q}dt\right) ^{\frac{1}{q}} \\
&+\frac{\left( b-x\right) ^{\alpha +1}}{b-a}\left( \int_{0}^{1}t^{p\alpha
}dt\right) ^{\frac{1}{p}}\left( \int_{0}^{1}\left| f'\left(
tx+(1-t)b\right) \right| ^{q}dt\right) ^{\frac{1}{q}}.
\end{align*}%
Since $\left| f'\right| ^{q}$ is $h-$convex and $\left| f'(x)\right| \leq M$, we get
\begin{align*}
\int_{0}^{1}\left\vert f^{\prime }\left( tx+\left( 1-t\right) a\right)
\right\vert ^{q}dt \leq &\int_{0}^{1}\left[ h\left( t\right) \left\vert
f^{\prime }\left( x\right) \right\vert ^{q}+h\left( 1-t\right) \left\vert
f^{\prime }\left( a\right) \right\vert ^{q}\right] dt \\
\leq &M^{q}\int_{0}^{1}\left[ h\left( t\right) +h\left( 1-t\right) \right]
dt
\end{align*}
and similarly
\begin{align*}
\int_{0}^{1}\left\vert f^{\prime }\left( tx+\left( 1-t\right) b\right)
\right\vert ^{q}dt
\leq    M^{q}\int_{0}^{1}\left[ h\left( t\right) +h\left( 1-t\right) \right]
dt.
\end{align*}
By simple computation, we have
\begin{equation*}
\int_{0}^{1}t^{p\alpha }dt=\frac{1}{p\alpha +1}.
\end{equation*}
Using these results, we complete the  proof of   \eqref{E7}.

 By using the super-additive property of $h$ in the assumptions, we further have
\begin{align*}
 \int_{0}^{1}\left[ h\left( t\right) +h\left( 1-t\right) \right]
dt\leq \int_{0}^{1} h(1) dt=h(1).
\end{align*}%
Hence,    the  proof of   \eqref{2.6} is complete.
\end{proof}

\begin{remark}
\label{R2}
We note that in the  proof of \eqref{E7} we does not use the additional super-additive property of $h$.
In Theorem \ref{T2},  if we choose $h(t)=t$, then  \eqref{E7}
reduces the inequality in  \cite[Corollary 2]{s2012}; in Theorem \ref{T2}, if we choose $\alpha =1$, then  \eqref{E7}
becomes
\begin{align}
 \left| f(x)-\frac{1}{b-a}\int_{a}^{b}f(t)dt\right|
\leq  \frac{M\left[ \left( x-a\right)
^{2}+\left( b-x\right) ^{2}\right]}{\left( 1+p \right) ^{\frac{1}{p}}(b-a)}\left(\int_{0}^{1}\left[ h\left( t\right) +h\left( 1-t\right) \right]
dt\right)^{\frac{1}{q}}, \label{E7'}
\end{align}
which can be better than the inequality  \eqref{e1} provide that $p, q$ and $h$  are chosen such that
$$\left(\int_{0}^{1}\left[ h\left( t\right) +h\left( 1-t\right) \right]
dt\right)^{\frac{1}{q}}<\frac{1}{2}\left( 1+p \right)^{\frac{1}{p}}.$$
\end{remark}

\begin{corollary}
If we choose $h\left( t\right) =t^{s}$,   $s\in \left( 0,1\right]$, in Theorem \ref{T2}, then
we have%
\begin{align*}
&\left|\left( \frac{\left( x-a\right) ^{\alpha }+\left( b-x\right) ^{\alpha }}{b-a%
}\right) f(x)-\frac{\Gamma (\alpha +1)}{\left( b-a\right) }\left[
J_{x-}^{\alpha }f(a)+J_{x+}^{\alpha }f(b)\right] \right|\notag \\
\leq &\frac{M}{\left( 1+p\alpha \right) ^{\frac{1}{p}}}\left( \frac{2}{s+1}%
\right) ^{\frac{1}{q}}\frac{\left( x-a\right) ^{\alpha +1}+\left(
b-x\right) ^{\alpha +1}}{b-a},
\end{align*}
due to the fact that
\begin{align*}
 \int_{0}^{1}\left[ h\left( t\right) +h\left( 1-t\right) \right]
dt
= \frac{2}{s+1}.
\end{align*}
This is the inequality established in \cite[Theorem 8]{s2012}.
 \end{corollary}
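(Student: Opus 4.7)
The plan is a direct specialization of Theorem~\ref{T2}. I would invoke inequality \eqref{E7} with $h(t)=t^{s}$, $s\in(0,1]$, so that the $h$-convexity of $|f'|^{q}$ becomes the familiar $s$-convexity in the second sense (i.e.\ $|f'|^{q}\in K_{s}^{2}$). As Remark~\ref{R2} already points out, the super-additivity hypothesis in Theorem~\ref{T2} is used only in the passage from \eqref{E7} to \eqref{2.6}; this is important, because $t^{s}$ is actually sub-additive (not super-additive) on $[0,1]$ when $s\in(0,1)$, so it is \eqref{E7} and not \eqref{2.6} that one must invoke.

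The only computation is the evaluation of $\int_{0}^{1}[h(t)+h(1-t)]\,dt$ at $h(t)=t^{s}$. Both terms equal $1/(s+1)$ (the change of variables $u=1-t$ identifies the second with the first), so the sum equals $2/(s+1)$. Substituting this value, together with $\int_{0}^{1}t^{p\alpha}\,dt=1/(1+p\alpha)$, into \eqref{E7} immediately yields the bound $\frac{M}{(1+p\alpha)^{1/p}}\bigl(\frac{2}{s+1}\bigr)^{1/q}\frac{(x-a)^{\alpha+1}+(b-x)^{\alpha+1}}{b-a}$, which is exactly the claimed inequality and coincides with \cite[Theorem 8]{s2012}.

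There is no substantive obstacle here, since all the real work has already been done in Theorem~\ref{T2}. The only point requiring any care is the one flagged above: one should cite \eqref{E7} rather than the sharper-looking \eqref{2.6}, so as not to rely on a super-additivity property that $h(t)=t^{s}$ does not enjoy on $[0,1]$ for $s<1$.
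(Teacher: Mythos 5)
Your proposal is correct and follows essentially the same route as the paper: substitute $h(t)=t^{s}$ into \eqref{E7} and evaluate $\int_{0}^{1}[t^{s}+(1-t)^{s}]\,dt=2/(s+1)$. Your additional observation that $t^{s}$ is sub-additive on $[0,1]$ for $s<1$, so that one must rely on \eqref{E7} (whose proof, per Remark \ref{R2}, does not use super-additivity) rather than on the hypotheses of Theorem \ref{T2} as literally stated, is a valid and worthwhile point of care that the paper glosses over.
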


\begin{theorem}
\label{T3} Let $h:J\subseteq\mathbb{R}\rightarrow\mathbb{R}$  $([0,1]\subseteq J)$ be a non-negative and super-multiplicative function, $h\left( t \right) \geq t$ for $0\le t\le 1$, $f:\left[ a,b\right] \subset \lbrack 0,\infty )\rightarrow
\mathbb{R}$  be a differentiable mapping on $(a,b)$ with $a<b$ such that $%
f'\in L_1\left[ a,b\right].$ If $\left| f'\right|^q $
is $h-$convex  on $[a,b]$, $q\ge 1$  and
$\left| f'(x)\right| \leq M,$ $x\in \left[ a,b\right],$
then the following inequalities for fractional integrals with $\alpha >0$
hold:%
\begin{align}
&\left| \left( \frac{\left( x-a\right) ^{\alpha }+\left( b-x\right)
^{\alpha }}{b-a}\right) f(x)-\frac{\Gamma (\alpha +1)}{\left( b-a\right) }%
\left[ J_{x-}^{\alpha }f(a)+J_{x+}^{\alpha }f(b)\right] \right|  \notag   \\
\leq &\frac{M}{\left( 1+\alpha \right) ^{1-\frac{1}{q}}}  \frac{\left( x-a\right) ^{\alpha +1}+\left(
b-x\right) ^{\alpha +1}}{b-a}\left( \int_{0}^{1}\left[t^{\alpha}h\left(t\right)   +t^{\alpha}h\left(1-t\right) \right] dt
\right) ^{\frac{1}{q}} \label{2.7}\\
\leq &\frac{M}{\left( 1+\alpha \right) ^{1-\frac{1}{q}}}  \frac{\left( x-a\right) ^{\alpha +1}+\left(
b-x\right) ^{\alpha +1}}{b-a}\left(\int_{0}^{1}\left[h\left(t^{\alpha+1}\right)   +h\left(t^{\alpha}(1-t)\right) \right] dt
\right) ^{\frac{1}{q}}. \label{E8}
\end{align}
\end{theorem}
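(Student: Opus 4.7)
The plan is to mimic the proof of Theorem \ref{T2}, replacing Hölder's inequality by the power-mean inequality (which is available because $q\ge 1$), and then close out the estimate using exactly the same super-multiplicative manipulation already carried out in Theorem \ref{T1}. So the proof will consist of three pieces glued together: the identity \eqref{E1}, a power-mean estimate on each of the two resulting integrals, and the chain \eqref{2.3}.

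First I would apply the triangle inequality to \eqref{E1} to bound $|\mathrm{LHS}|$ by the sum
$$\frac{(x-a)^{\alpha+1}}{b-a}\int_{0}^{1} t^{\alpha}\,|f'(tx+(1-t)a)|\,dt + \frac{(b-x)^{\alpha+1}}{b-a}\int_{0}^{1} t^{\alpha}\,|f'(tx+(1-t)b)|\,dt.$$
For each integral I would then split $t^{\alpha}=t^{\alpha(1-1/q)}\cdot t^{\alpha/q}$ and apply the power-mean (i.e.\ Hölder with weight $t^{\alpha}\,dt$), giving for example
$$\int_{0}^{1} t^{\alpha}|f'(tx+(1-t)a)|\,dt \le \Bigl(\int_{0}^{1} t^{\alpha}\,dt\Bigr)^{1-1/q}\Bigl(\int_{0}^{1} t^{\alpha}|f'(tx+(1-t)a)|^{q}\,dt\Bigr)^{1/q},$$
and $\int_{0}^{1} t^{\alpha}\,dt = 1/(\alpha+1)$ produces the prefactor $(1+\alpha)^{-(1-1/q)}$.

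Next, using that $|f'|^{q}$ is $h$-convex together with $|f'|\le M$, I would estimate
$$|f'(tx+(1-t)a)|^{q} \le h(t)|f'(x)|^{q} + h(1-t)|f'(a)|^{q} \le M^{q}\bigl[h(t)+h(1-t)\bigr],$$
and similarly at $b$. Multiplying by $t^{\alpha}$ and integrating yields $M^{q}\int_{0}^{1} t^{\alpha}[h(t)+h(1-t)]\,dt$ inside each $q$-th root. Combining the two pieces and factoring out $M[(x-a)^{\alpha+1}+(b-x)^{\alpha+1}]/(b-a)$ gives \eqref{2.7}.

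Finally, to pass from \eqref{2.7} to \eqref{E8}, I would simply invoke \eqref{2.3}, which already records that under the super-multiplicative hypothesis on $h$ and $h(t)\ge t$ one has $\int_{0}^{1}[t^{\alpha}h(t)+t^{\alpha}h(1-t)]\,dt \le \int_{0}^{1}[h(t^{\alpha+1})+h(t^{\alpha}(1-t))]\,dt$; raising to the $1/q$ power (monotone since $q\ge 1$) finishes the proof. There is no real obstacle: the only care needed is to use the power-mean version of Hölder (so that the case $q=1$ is included with the convention $(1+\alpha)^{-(1-1/q)}=1$) and to remember to apply the $h$-convexity to $|f'|^{q}$ rather than to $|f'|$ itself.
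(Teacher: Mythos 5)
Your proposal is correct and follows essentially the same route as the paper's own proof: the identity of Lemma \ref{L1}, the power-mean inequality with weight $t^{\alpha}\,dt$ producing the factor $(1+\alpha)^{-(1-1/q)}$, the $h$-convexity bound $M^{q}\int_{0}^{1}[t^{\alpha}h(t)+t^{\alpha}h(1-t)]\,dt$, and finally the chain \eqref{2.3} to pass to \eqref{E8}. Your explicit remarks about monotonicity of the $q$-th root and the $q=1$ case are minor additions the paper leaves implicit.
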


\begin{proof}
From Lemma \ref{L1} and using the well-known power mean inequality, we have%
\begin{align*}
&\left| \left( \frac{\left( x-a\right) ^{\alpha }+\left( b-x\right)
^{\alpha }}{b-a}\right) f(x)-\frac{\Gamma (\alpha +1)}{\left( b-a\right) }%
\left[ J_{x-}^{\alpha }f(a)+J_{x+}^{\alpha }f(b)\right] \right|  \\
\leq &\frac{\left( x-a\right) ^{\alpha +1}}{b-a}\int_{0}^{1}t^{\alpha
}\left| f'\left( tx+(1-t)a\right) \right| dt +\frac{\left( b-x\right) ^{\alpha +1}}{b-a}\int_{0}^{1}t^{\alpha
}\left| f'\left( tx+(1-t)b\right) \right| dt \\
\leq &\frac{\left( x-a\right) ^{\alpha +1}}{b-a}\left(
\int_{0}^{1}t^{\alpha }dt\right) ^{1-\frac{1}{q}}\left(
\int_{0}^{1}t^{\alpha }\left| f'\left( tx+(1-t)a\right)
\right| ^{q}dt\right) ^{\frac{1}{q}} \\
&+\frac{\left( b-x\right) ^{\alpha +1}}{b-a}\left( \int_{0}^{1}t^{\alpha
}dt\right) ^{1-\frac{1}{q}}\left( \int_{0}^{1}t^{\alpha }\left|
f'\left( tx+(1-t)b\right) \right| ^{q}dt\right) ^{\frac{1}{q}}.
\end{align*}%
Since $\left| f'\right| ^{q}$ is $h-$convex  on $[a,b]$ and $\left| f'(x)\right| \leq M$, we get
\begin{align*}
\int_{0}^{1}t^{\alpha }\left| f'\left( tx+(1-t)a\right)
\right| ^{q}dt \leq &\int_{0}^{1}\left[ t^{\alpha }h(t)\left|
f'\left( x\right) \right| ^{q}+t^{\alpha }h(1-t)\left|
f'\left( a\right) \right| ^{q}\right] dt \\
\le &  M^{q} \int_{0}^{1}\left[ t^{\alpha }h(t) +t^{\alpha }h(1-t) \right] dt
\end{align*}%
and similarly
\begin{align*}
\int_{0}^{1}t^{\alpha }\left| f'\left( tx+(1-t)b\right)
\right| ^{q}dt  \leq  M^{q} \int_{0}^{1}\left[ t^{\alpha }h(t) +t^{\alpha }h(1-t) \right] dt.
\end{align*}
Using these inequalities, we complete the  proof of   \eqref{2.7}.

By using the additional
properties of $h$ in the assumptions, we further have \eqref{2.3}.
Hence,    the  proof of   \eqref{E8} is complete.
\end{proof}

\begin{remark}
\label{R3}
We note that in the  proof of \eqref{2.7} we does not use the additional super-multiplicative property of $h$ and the condition ``$h\left( t \right) \geq t$ for $0\le t\le 1$".
In Theorem \ref{T3}, if we choose $\alpha =1$, then  \eqref{E8}
reduces the inequality \cite[(2.4)]{t2012};  in Theorem \ref{T3}, if we choose $h(t)=t$, then  \eqref{2.7} and \eqref{E8}
reduce  the inequality in  \cite[Corollary 3]{s2012}.
\end{remark}

\begin{corollary}
If we choose $h\left( t\right) =t^{s}$,   $s\in \left( 0,1\right] $, in Theorem \ref{T3}, then
we have%
\begin{align*}
&\left|\left( \frac{\left( x-a\right) ^{\alpha }+\left( b-x\right) ^{\alpha }}{b-a%
}\right) f(x)-\frac{\Gamma (\alpha +1)}{\left( b-a\right) }\left[
J_{x-}^{\alpha }f(a)+J_{x+}^{\alpha }f(b)\right] \right|\notag \\
\leq &\frac{M}{\left( 1+\alpha \right) ^{1-\frac{1}{q}}}\frac{1}{(\alpha +s+1)^{\frac{1}{q}}}\left[1+\frac{\Gamma \left(\alpha+1\right) \Gamma \left(s+1\right) }{\Gamma \left( \alpha s+s+1\right) }\right]^{\frac{1}{q}}\frac{  \left( x-a\right)
^{\alpha +1}+\left( b-x\right) ^{\alpha +1} }{ b-a }\notag \\
\leq &\frac{M}{\left( 1+\alpha \right) ^{1-\frac{1}{q}}}\frac{1}{(\alpha +s+1)^{\frac{1}{q}}}\left[1+\frac{\Gamma \left(\alpha s+1\right) \Gamma \left(s+1\right) }{\Gamma \left( \alpha s+s+1\right) }\right]^{\frac{1}{q}}\frac{  \left( x-a\right)
^{\alpha +1}+\left( b-x\right) ^{\alpha +1} }{ b-a }.
\end{align*}
The first inequality is the same as the one established in \cite[Theorem 9]{s2012}.
 \end{corollary}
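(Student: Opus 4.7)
The plan is to specialize Theorem \ref{T3} to $h(t)=t^s$ and then reduce each of the two integrals on the right-hand side of \eqref{2.7} and \eqref{E8} to a sum of Beta functions.

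First I would check that $h(t)=t^s$ fits the hypothesis framework of Theorem \ref{T3} on $J=[0,\infty)$. Nonnegativity is immediate; super-multiplicativity holds with equality since $h(xy)=(xy)^s=x^sy^s=h(x)h(y)$; and $h(t)=t^s\ge t$ on $[0,1]$ whenever $s\in(0,1]$. The $h$-convexity of $|f'|^q$ and the bound $|f'|\le M$ are inherited directly from the corollary's own hypotheses, so Theorem \ref{T3} applies verbatim with this choice of $h$.

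Next, using the Beta identity $\beta(x,y)=\Gamma(x)\Gamma(y)/\Gamma(x+y)$, I would evaluate
\[
\int_0^1\!\bigl[t^{\alpha}h(t)+t^{\alpha}h(1-t)\bigr]\,dt=\frac{1}{\alpha+s+1}+\beta(\alpha+1,s+1),
\]
and, after invoking $\Gamma(\alpha+s+2)=(\alpha+s+1)\Gamma(\alpha+s+1)$ to put the two terms over a common denominator, collect them into
\[
\frac{1}{\alpha+s+1}\left[1+\frac{\Gamma(\alpha+1)\Gamma(s+1)}{\Gamma(\alpha+s+1)}\right].
\]
An entirely parallel computation with $h(t^{\alpha+1})=t^{(\alpha+1)s}$ and $h(t^{\alpha}(1-t))=t^{\alpha s}(1-t)^{s}$ produces
\[
\frac{1}{\alpha s+s+1}\left[1+\frac{\Gamma(\alpha s+1)\Gamma(s+1)}{\Gamma(\alpha s+s+1)}\right]
\]
for the integral appearing in \eqref{E8}. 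Substituting these two evaluations into the bounds \eqref{2.7} and \eqref{E8} of Theorem \ref{T3} immediately yields the two inequalities of the corollary.

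There is no substantive obstacle: the argument is entirely the verification of the three $h$-axioms followed by routine Beta-function bookkeeping. The only pitfall worth flagging is to pair the right integral with the right inequality of Theorem \ref{T3} — one uses $\int_0^1 t^{\alpha}h(\cdot)\,dt$ and the other $\int_0^1 h(t^{\alpha}\cdot)\,dt$ — so that the correct Gamma ratio lands next to the correct $(\alpha+s+1)$ versus $(\alpha s+s+1)$ prefactor.
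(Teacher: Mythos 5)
Your proof is correct and follows exactly the route the paper intends: verify that $h(t)=t^{s}$ is non-negative, super-multiplicative and satisfies $h(t)\ge t$ on $[0,1]$, then evaluate the two integrals in \eqref{2.7} and \eqref{E8} via the Beta function, precisely as the paper does in the analogous corollary to Theorem \ref{T1}. The only point to flag is that your (correct) evaluations produce $\Gamma(\alpha+s+1)$ in the denominator of the first Gamma ratio and the prefactor $(\alpha s+s+1)^{-\frac{1}{q}}$ in the second bound, whereas the printed corollary shows $\Gamma(\alpha s+s+1)$ and $(\alpha+s+1)^{-\frac{1}{q}}$ there — those appear to be typographical errors in the statement, and your expressions are the ones actually implied by Theorem \ref{T3}.
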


\end{document}